\newtheorem{thm}{Theorem}[section] 
\newtheorem{defn}[thm]{Definition}
\newtheorem{prop}[thm]{Proposition}
\DeclareMathOperator{\ed}{ed}
\DeclareMathOperator{\edc}{edc}
\DeclareMathOperator{\disc}{disc}
\DeclareMathOperator{\Br}{Br}
\DeclareMathOperator{\Tr}{Tr}
\DeclareMathOperator{\N}{N}
\DeclareMathOperator{\Gal}{Gal}
\newcommand\operA[2]{{\if!#2!\operatorname{#1}\else{\operatorname{#1}_{#2}^{\phantom{I}}}\fi}} 
\def\Gal{{\operatorname{Gal}}}
\newcommand{\Trace}[1][]{\if!#1!\operatorname{Tr}\else{\operatorname{Tr}_{#1}^{\phantom{I}}}\fi} 
\newcommand\book[4]{{{#1},\ {{#2}}{\if!#3!\relax\else{,\ {#3}}\fi}{\if!#4!\relax\else{,\ {#4}}\fi}.}} 
\newif\ifXY 
\begin{document}
\title{$\mathbb{Z}_3\times \mathbb{Z}_3$ crossed products}
\author{Eliyahu Matzri}

\thanks{This work was supported by the U.S.-Israel Binational Science Foundation (grant no. 2010149).}
\thanks{The author also thanks Daniel Krashen and UGA for hosting him while this work was done.}

\maketitle

\begin{abstract}
Let $A$ be the generic abelian crossed product with respect to $\mathbb{Z}_3\times \mathbb{Z}_3$, in this note we show that $A$ is similar to the tensor product of 4 symbol algebras (3 of degree 9 and one of degree 3) and if $A$ is of exponent $3$ it is similar to the product of 31 symbol algebras of degree $3$. We then use \cite{RS} to prove that if $A$ is any algebra of degree $9$ then $A$ is similar to the product of $35840$ symbol algebras ($8960$ of degree $3$ and $26880$ of degree $9$) and if $A$ is of exponent $3$ it is similar to the product of $277760$ symbol algebras of degree $3$. We then show that the essential $3$-dimension of the class of $A$ is at most $6$. 
\end{abstract}


\section{Introduction}
Throughout this note we let $F$ be a field containing all necessary roots of unity, denoted $\rho_n$.
The well known Merkurjev-Suslin theorem says that: assuming $F$ contains a primitive $n$-th root of $1$, there is an isomorphism
$\psi : K_2(F)/nK_2(F)\longrightarrow \Br(F)_n$ sending the symbol $\{a,b\}$ to the symbol algebra $(a,b)_{n,F}$.
In particular the $n$-th torsion part of the Brauer group is generated by symbol algebras of degree $n$.
This means every $A\in \Br(F)_n$ is similar (denoted $\sim$) to the tensor product of symbol algebras of degree $n$.
However, their proof is not constructive. It thus raises the following questions. Let $A$ be an algebra of degree $n$ and exponent $m$. Can one explicitly write $A$ as the tensor product of degree $m$ symbol algebras? Also, what is the smallest number of factors needed to express $A$ as the tensor product of degree $m$ symbol algebras? This number is sometimes called the Merkurjev-Suslin number.
These questions turn out to be quite hard in general and not much is known. Here is a short summary of some known results.
\begin{enumerate}
	\item Every degree $2$ algebra is isomorphic to a quaternion algebra.
	\item Every degree $3$ algebra is cyclic thus if $\rho_3\in F$ it is isomorphic to a symbol algebra (Wedderburn \cite{W}).
	\item Every degree $4$ algebra of exponent $2$ is isomorphic to a product of two quaternion algebras (Albert \cite{Al}).
	\item Every degree $p^n$ symbol algebra of exponent $p^m$ is similar to the product of $p^{n-m}$ symbol algebras of degree $p^m$(Tignol \cite{T1}).
	\item Every degree $8$ algebra of exponent $2$ is similar to the product of four quaternion algebras (Tignol \cite{T2}).
	\item Every abelian crossed product with respect to $\mathbb{Z}_n\times \mathbb{Z}_2$ is similar to the product of a symbol algebra of degree $2n$ and a quterinion algebra, in particular, due to Albert \cite{Al}, every degree $4$ algebra is similar to the product of a degree $4$ symbol algebra and a quaternion algebra (Lorenz, Rowen, Reichstein, Saltman \cite{LRRS}).
	\item Every abelian crossed product with respect to $(\mathbb{Z}_2)^4$  of exponent $2$ is similar to the product of $18$ quaternion algebras (Sivatski \cite{SV}).
	\item Every $p$-algebra of degree $p^n$ and exponent $p^m$ is similar to the product of $p^n-1$ cyclic algebras of degree $p^m$ (Florence \cite{MF}).
\end{enumerate}

In this paper we prove theorems \ref{MT2} and \ref{MT3} stating:

\textit{Let $A$ be an abelian crossed product with respect to $\mathbb{Z}_3\times \mathbb{Z}_3$. Then 
\begin{enumerate}
	\item $A$ is similar to the product of $4$ symbol algebras ($3$ of degree $9$ and one of degree $3$).
	\item If $A$ is of exponent $3$ then $A$ is similar to the product of $31$ symbol algebras of degree $3$.
\end{enumerate}
}

We then use \cite{RS} to deduce the general case of an algebra of degree $9$ to get theorem \ref{MT4} stating:

\textit{
Let $A$ be an $F$-central simple algebra of degree $9$. Then 
\begin{enumerate}
	\item $A$ is similar to the product of $35840$ symbol algebras, ($8960$ of degree $3$ and $26880$ of degree $9$).
	\item If $A$ is of exponent $3$ then $A$ is similar to the product of $277760$ symbol algebras of degree $3$.
\end{enumerate}}

\newpage

\section{$\mathbb{Z}_p\times \mathbb{Z}_p$ abelian crossed products}
Let $A$ be the generic abelian crossed product with respect to $G=\mathbb{Z}_p\times \mathbb{Z}_p$ over $F$, where $p$ is an odd prime. In the notation of \cite{AS} this means:
$A=(E,G,b_1,b_2,u)=E[z_1,z_2| z_i e z_i^{-1}=\sigma_i(e); z_1^p=b_1; z_2^p=b_2; z_2z_1=uz_1z_2; b_i\in E_i= E^{<\sigma_i>}; u\in E^{\times} \  s.t. \N_{E/F}(u)=1]$ where $\Gal(E/F)=<\sigma_1,\sigma_2>\cong G$.

Let $A$ be as above.
Write $E=E_1E_2$ where $E_1=F[t_1| \ t_1^{p}=f_1\in F^{\times}]$ and $E_2=F[t_2| \ t_2^{p}=f_2\in F^{\times}]$ thus we have $z_i t_i z_i^{-1}=\sigma_i(t_i)=t_i$ and $z_1 t_2=\rho_pt_2z_1; \ z_2 t_1=\rho_pt_1z_2$.
Since $b_i\in E_i$ we can write $b_1=c_0+c_1t_1+...+c_{p-1}t_1^{p-1}; \ b_2=a_0+a_1t_2+...+a_{p-1}t_2^{p-1}$  where $a_i,c_i\in F^{\times}$.

\begin{prop}\label{1}
Define $v=e_1z_1+e_2z_2$ for $e_i\in E$. If $v\neq 0$, then $[F[v^p]:F]=p$.
\end{prop}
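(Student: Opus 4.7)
The plan is to find an element of $A$ whose inner action rescales $v$ by $\rho_p^{-1}$; this rescaling will force the reduced characteristic polynomial of $v$ to be a polynomial in $X^p$, giving $[F[v^p]:F]\le p$ immediately. Afterwards, I need only rule out $v^p\in F$.

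A direct check with the defining relations shows $t_1 z_2 t_1^{-1}=\rho_p^{-1} z_2$ and $t_2 z_1 t_2^{-1}=\rho_p^{-1} z_1$, while $t_i$ commutes with $z_i$. Setting $y:=t_1 t_2$, I therefore get $yvy^{-1}=\rho_p^{-1} v$. Let $\chi_v(X)\in F[X]$ denote the reduced characteristic polynomial of $v$, of degree $p^2$. Inner-invariance gives $\chi_v=\chi_{\rho_p^{-1} v}$, and the scaling identity $\chi_{\alpha v}(X)=\alpha^{p^2}\chi_v(X/\alpha)$ together with $\rho_p^{p^2}=1$ yields $\chi_v(X)=\chi_v(\rho_p X)$. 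Thus $\chi_v$ lies in $F[X^p]$; writing $\chi_v(X)=g(X^p)$ with $g\in F[X]$ monic of degree $p$, the reduced Cayley--Hamilton theorem gives $g(v^p)=0$, so $[F[v^p]:F]\le p$. Since subfields of the degree-$p^2$ central simple algebra $A$ have $F$-degree dividing $p^2$, we conclude $[F[v^p]:F]\in\{1,p\}$.

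The remaining step, and the main obstacle, is to show $v^p\notin F$. My plan is to compute $v^p$ directly: expanding $(e_1 z_1+e_2 z_2)^p$ and normalising via $z_i e=\sigma_i(e) z_i$ and $z_2 z_1 = u z_1 z_2$ yields
\[
v^p = N_{E/E_1}(e_1)\,b_1 + N_{E/E_2}(e_2)\,b_2 + \sum_{k=1}^{p-1} c_k\, z_1^{p-k} z_2^k,
\]
where each $c_k\in E$ is an explicit noncommutative binomial-type sum in the $\sigma_i$-conjugates of $e_1, e_2, u$. Since $E_1\cap(F+E_2)=F$, the hypothesis $v^p\in F$ would force $c_k=0$ for $1\le k\le p-1$ together with $N_{E/E_i}(e_i)\,b_i\in F$ for $i=1,2$; the genericity of the parameters $b_1, b_2, u$ defining $A$ then forces $e_1=e_2=0$, contradicting $v\neq 0$.
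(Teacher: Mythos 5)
The first half of your argument is correct and is a mild variant of the paper's: the paper likewise starts from the computation $vt_1t_2=\rho_pt_1t_2v$, but instead of passing to the reduced characteristic polynomial it observes that $v^p$ commutes with $t_1t_2$ while $v$ does not, deduces $[F[v]:F[v^p]]=p$, and then analyzes $[F[v]:F]\in\{p,p^2\}$. Either route reduces the proposition to the same remaining point: excluding $v^p\in F$.

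That is exactly where your proposal has a genuine gap. The paper excludes $v^p\in F$ structurally: if $v^p\in F^\times$, then $F[t_1t_2,v]$ is a degree-$p$ symbol algebra over $F$ (generated by the $\rho_p$-commuting pair $t_1t_2$, $v$ with $(t_1t_2)^p=f_1f_2\in F^\times$ and $v^p\in F^\times$), and the double centralizer theorem then decomposes $A$ into two degree-$p$ factors, which is impossible for the generic abelian crossed product. Your substitute --- expand $v^p$, note that $v^p\in F$ forces $c_k=0$ and $N_{E/E_i}(e_i)b_i\in F$, and then assert that ``genericity of $b_1,b_2,u$ forces $e_1=e_2=0$'' --- is not a proof but a restatement of what must be proved. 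Already in the degenerate case $e_2=0$, $e_1\neq0$ (which the statement covers, since only $v\neq0$ is assumed), every $c_k$ with $1\le k\le p-1$ vanishes automatically and the entire content of the proposition collapses to the assertion that $b_1\notin F^\times\cdot N_{E/E_1}(E^\times)$; that is a nontrivial norm-theoretic statement about the generic parameters which you neither formulate nor prove, and nothing in your expansion addresses it. When both $e_i$ are nonzero, the system $c_k=0$ is even less tractable. You would need to supply an actual specialization or valuation-theoretic argument to substantiate these genericity claims; the cleaner fix is to replace this step by the paper's double-centralizer argument, which converts $v^p\in F$ into a decomposition of $A$ and lets the known indecomposability (equivalently, exponent $p^2$) of the generic $\mathbb{Z}_p\times\mathbb{Z}_p$ crossed product finish the proof.
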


\begin{proof}
First we compute $vt_1t_2=(e_1z_1+e_2z_2)t_1t_2=e_1z_1t_1t_2+ e_2z_2t_1t_2=\rho_pt_1t_2e_1z_1+ \rho_pt_1t_2e_2z_2=\rho_pt_1t_2(e_1z_1+e_2z_2)=\rho_pt_1t_2v$.
Thus $v^p$ commutes with $t_1t_2$ where $v$ does not, implying $[F[v]:F[v^p]]=p$. By the definition of $v$ we have $v\notin F$. Thus $\deg(A)=p^2$ imply $[F[v]:F]\in \{p,p^2\}$. If $[F[v]:F]=p$ we get that $A$ contains the sub-algebra
generated by $t_1t_2, v$ which is a degree $p$ symbol over $F$ and by the double centralizer this will imply that $A$ is decomposable which is not true in the generic case. Thus 
$[F[v]:F]=p^2$ implying $[F[v^p]:F]=p$ and we are done.
\end{proof}

The first step we take is to find a $v$ satisfying $\Tr(v^p)=0$.
In order to achieve that we will tensor $A$ with an $F$-symbol of degree $p$.

Define $B=(E_1,\sigma_2, \frac{-c_0}{a_0})\sim (E,G,1,\frac{-c_0}{a_0},1)$. Now by \cite{AS} $ A\otimes B$ is similar to $C=(E,G,b_1,\frac{-c_0}{a_0}b_2,u)$.
Abusing notation we write $z_1,z_2$ for the new ones in $C$.

\begin{prop}
Defining $v=z_1+z_2$ in $C$ we have $\Tr(v^p)=0$.
\end{prop}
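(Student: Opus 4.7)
The plan is to use the standard identification for a crossed product $C=(E,G,f)$: the reduced trace of an element $\sum_{\sigma\in G} e_\sigma u_\sigma$ equals $\Tr_{E/F}(e_1)$, the trace down to $F$ of the coefficient of the identity basis element $u_1=1$. So it suffices to identify the $E$-coefficient of $1$ in the expansion of $v^p$, and then apply $\Tr_{E/F}$.

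To find that coefficient, expand $(z_1+z_2)^p$ as a sum over all length-$p$ words in $z_1,z_2$. Each word with $i$ occurrences of $z_1$ and $p-i$ occurrences of $z_2$ can be rewritten, by repeated use of $z_2z_1=uz_1z_2$ together with $z_je=\sigma_j(e)z_j$, in the normal form $c\cdot z_1^i z_2^{p-i}$ for some $c\in E$ (a product of $\sigma_j$-conjugates of powers of $u$). For $1\le i\le p-1$ the group element $\sigma_1^i\sigma_2^{p-i}$ is nontrivial in $G=\mathbb{Z}_p\times\mathbb{Z}_p$, so all such words lie in off-identity components of the crossed-product decomposition of $C$ and contribute nothing to the coefficient of $u_1$. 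The only remaining contributions come from the two pure words $z_1^p$ and $z_2^p$, each appearing with coefficient $1$, and both already sit inside $E$. Hence the $u_1$-coefficient of $v^p$ is exactly
$$z_1^p+z_2^p=b_1+\frac{-c_0}{a_0}\,b_2\in E.$$

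Finally, evaluate $\Tr_{E/F}\!\left(b_1-\tfrac{c_0}{a_0}b_2\right)$ through the tower $F\subset E_1\subset E$. Since $b_1\in E_1=E^{\langle\sigma_1\rangle}$, $\Tr_{E/E_1}(b_1)=p\,b_1$, and on $E_1=F[t_1]$ the nontrivial element of $\Gal(E_1/F)$ is induced by $\sigma_2$, which acts by $t_1\mapsto\rho_p t_1$; consequently $\Tr_{E_1/F}(t_1^i)=0$ for $1\le i\le p-1$ and equals $p$ for $i=0$. This leaves $\Tr_{E/F}(b_1)=p^2c_0$, and a symmetric computation gives $\Tr_{E/F}(b_2)=p^2a_0$. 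The two contributions cancel:
$$\Tr(v^p)=p^2c_0-\frac{c_0}{a_0}\cdot p^2a_0=0.$$

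No step is genuinely hard; all the content lies in the preceding construction of $C$ from $A\otimes B$, where the scalar $-c_0/a_0$ stuffed into the second slot of $B$ was engineered precisely so that the constant terms of $b_1$ and of the modified second parameter annihilate under $\Tr_{E/F}$. The one thing to be mindful of is that the cocycle implicit in the $z_1,z_2,u$ presentation is normalized (so that $f(1,\sigma)=1$), which is automatic in the generators-and-relations form being used, so that the identity $\Tr(\sum e_\sigma u_\sigma)=\Tr_{E/F}(e_1)$ applies as stated.
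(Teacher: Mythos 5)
Your proof is correct and follows essentially the same route as the paper: expand $v^p$, observe that all words other than $z_1^p$ and $z_2^p$ land in components $Ez_1^iz_2^j$ with $(i,j)\neq(0,0)$ and hence have reduced trace zero, and check that the identity component $b_1+\frac{-c_0}{a_0}b_2$ has trace zero because the constant terms cancel by the choice of $B$. The only difference is cosmetic: the paper describes the trace-zero subspace $C_0$ and shows $v^p\in C_0$, while you compute $\Tr_{E/F}$ explicitly through the tower; both reduce to the same cancellation $p^2c_0-\frac{c_0}{a_0}p^2a_0=0$.
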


\begin{proof}
First notice that $C=\sum_{i,j=0}^{p-1} Ez_1^iz_2^j$. Thus $C_0=\{d\in C | \ \Tr(d)=0\}=E_0 +\sum_{i,j=0;(i,j)\neq(0,0)}^{p-1} Ez_1^iz_2^j$ where $E_0=\sum_{i,j=0;(i,j)\neq(0,0)}^{p-1} Ft_1^it_2^j$ is the set of trace zero elements of $E$.
Now computing we see $v^p=z_1^p+e_{p-1,1}z_1^{p-1}z_2+....+e_{1,p-1}z_2^{p-1}z_1+z_2^p=b_1+e_{p-1,1}z_1^{p-1}z_2+....+e_{1,p-1}z_2^{p-1}z_1+b_2$ where $e_{i,j}\in E$. Define $r=v^p-(b_1+b_2)$. Clearly $\Tr(r)=0$, since the powers of $z_1,z_2$ in all monomial appearing in $r$ are less then $p$ and at least one is greater than zero. Thus, $v^p=b_1+b_2+r=c_0+c_1t_1+...+c_{p-1}t_1^{p-1}+(-c_0+\frac{-c_0a_1}{a_0}t_2+...+\frac{-c_0a_{p-1}}{a_0}t_2^{p-1})+r\in C_0$, and we are done.
\end{proof}

\begin{prop}
$K\doteqdot F[t_1t_2,v^p]$ is a maximal subfield of $C$.
\end{prop}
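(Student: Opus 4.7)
The plan is to show that $K=F[t_1t_2,v^p]$ is a commutative subring of $C$ of $F$-dimension $p^2$ which is moreover a field; since $\deg(C)=p^2$, any such subring is automatically a maximal subfield.

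First I would establish commutativity and an upper bound on $[K:F]$. From the computation $vt_1t_2=\rho_pt_1t_2v$ carried out in the proof of Proposition~\ref{1}, raising to the $p$-th power yields $v^pt_1t_2=t_1t_2v^p$, so $v^p$ and $t_1t_2$ commute and $K$ is a commutative $F$-algebra. Since $t_1$ and $t_2$ commute inside $E$, $(t_1t_2)^p=f_1f_2\in F^{\times}$, so $t_1t_2$ satisfies $X^p-f_1f_2$ over $F$ and $[F[t_1t_2]:F]\le p$. Combined with $[F[v^p]:F]=p$ from Proposition~\ref{1}, this gives $[K:F]\le p^2$.

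The key step is to show that $t_1t_2\notin F[v^p]$. Assume for contradiction that it were. By Proposition~\ref{1}, $[F[v]:F]=p^2$, so $F[v]$ is a commutative field containing $F[v^p]$, and hence containing $t_1t_2$; in particular $t_1t_2$ would commute with $v$. This contradicts $vt_1t_2=\rho_pt_1t_2v$ since $\rho_p\ne 1$.

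To conclude, I would invoke Kummer theory: since $\rho_p\in F\subseteq F[v^p]$, the polynomial $X^p-f_1f_2\in F[v^p][X]$ either splits completely over $F[v^p]$ or is irreducible. Splitting completely would force $t_1t_2\in F[v^p]$, which was just ruled out; so the polynomial is irreducible, $[K:F[v^p]]=p$, $K$ is a field, and $[K:F]=p^2=\deg(C)$, making $K$ a maximal subfield. The main obstacle is the non-containment $t_1t_2\notin F[v^p]$, handled by exploiting the commutative field $F[v]$ from Proposition~\ref{1} together with the twisted conjugation $vt_1t_2=\rho_pt_1t_2v$.
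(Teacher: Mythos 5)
Your argument has a genuine gap: you never establish that $C$ is a division algebra (equivalently, that $F[v^p]$ and $K$ are domains), and the paper's own proof devotes its entire first step to exactly this point. The omission matters in two places. First, the Kummer dichotomy ``$X^p-f_1f_2$ is irreducible or splits completely'' is a statement about polynomials over a \emph{field}; that $F[v^p]$ is a field, rather than a $p$-dimensional commutative $F$-algebra with zero divisors, does not follow from $[F[v^p]:F]=p$ alone. Second, and more seriously, the implication ``splits completely $\Rightarrow t_1t_2\in F[v^p]$'' fails without domain-ness: if $f_1f_2=\alpha^p$ with $\alpha\in F[v^p]$, then $u=t_1t_2\alpha^{-1}$ satisfies $\prod_{i=0}^{p-1}(u-\rho_p^i)=0$ in the commutative ring $K$, but one can only conclude that $u$ equals some $\rho_p^i$ (hence $t_1t_2\in F[v^p]$) if $K$ has no zero divisors --- which is essentially the conclusion you are trying to reach, so the step is circular. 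The fix is the paper's opening observation: if $C$ were not division it would be similar to a degree $p$ algebra $D$, whence $A\cong D\otimes B^{op}$ would have exponent $p$, contradicting the genericity of $A$. Once $C$ is known to be division, every commutative subalgebra is a field and the Kummer detour becomes unnecessary: $K$ is a subfield properly containing the degree $p$ field $F[v^p]$ (by your non-containment argument), so $[K:F]$ is a multiple of $p$, strictly larger than $p$, dividing $p^2$, i.e.\ $[K:F]=p^2$.

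On the positive side, your argument that $t_1t_2\notin F[v^p]$ --- namely that it would place $t_1t_2$ inside the commutative ring $F[v]$ and contradict $vt_1t_2=\rho_pt_1t_2v$ --- is correct and arguably cleaner than the paper's, which instead assumes $v^p\in F[t_1t_2]$, uses the Galois action to force $v^p\in F$, and then derives a forbidden decomposition of $C$ via the double centralizer theorem. If you add the division-algebra step at the outset, your proof goes through and simplifies the second half of the paper's argument.
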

\begin{proof}
First, notice $C$ is a division algebra of degree $p^2$. To see this assume it is not, then it is similar to a degree $p$ algebra, $D$.
Thus $A\otimes B$ is similar to $D$, which implies $A$ is isomorphic to $D\otimes B^{op}$. But then $A$ has exponent $p$ which is false.
In the proof of \ref{1} we saw that $[v^p,t_1t_2]=0$ so we are left with showing $[K:F]=p^2$. Assuming $[K:F]=p$, we have $v^p\in F[t_1t_2]$. Let $\sigma$ be a generator of $\Gal(F[t_1t_2]/F)=<\sigma>$. Clearly $z_ix=\sigma(x)z_i$ for $i=1,2$ and $x\in F[t_1t_2]$, hence $vx=\sigma(x)v$, that is $\sigma(x)=vxv^{-1}$.  In particular, $\sigma(v^p)=vv^pv^{-1}=v^p$, implying $v^p\in F$. But then $C$ contains the sub algebra $F[t_1t_2,v]$ which is an F-csa of degree $p$, thus by the double centralizer $C$ would decompose into two degree $p$ algebras. This will imply that $A$ has exponent $p$, which is false. 
\end{proof}

The next step is to make $K$ Galois. Let $T$ be the Galois closure of $F[v^p]$.
Its Galois group is a subgroup of $S_p$ so has a cyclic $p$-Sylow subgroup, define $L$ to be the fixed subfield.
Clearly $F[v^p]\otimes L$ is Galois, with group $\mathbb{Z}_p$.
Thus in $C_L$ we have $K_L$ as a maximal Galois subfield with group $\mathbb{Z}_p\times \mathbb{Z}_p$.
Now writing $C_L$ as an abelian crossed product we have $C_L=(K,G,b_1,b_2,u)$ where this time we have $\Tr(b_2)=0$. Thus we can write $K_L=L[t_1t_2,t_3 | (t_1t_2)^p=f_1f_2; t_3^p=l\in L],$ \ $b_1\in L[t_1t_2]$  and $b_2=l_1t_3+...+l_{p-1}t_3^{p-1}$.

Now we change things even more. Define $D=(f_1f_2,(-\frac{f_1f_2}{l_1})^pl^{-1})_{p^2,L}=(K_L,G,t_1t_2,-\frac{f_1f_2}{l_1}(t_3)^{-1},\rho_{p^2})$ and again by \cite{AS} we have 
$R\doteqdot C_L\otimes D=(K_L,G,t_1t_2b_1,-f_1f_2-\frac{f_1f_2l_2}{l_1}t_3-...-\frac{f_1f_2l_{p-2}}{l_1}t_3^{p-2},\rho_{p^2} u)$. 

\newpage

\section{Generic $\mathbb{Z}_3\times \mathbb{Z}_3$ abelian crossed products}

From now we specialize to $p=3$.

\begin{prop}
$R$ from the end of the previous section is a symbol algebra of degree $9$.
\end{prop}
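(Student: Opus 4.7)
The plan is to exhibit $R$ as a cyclic algebra of degree $9$ by producing explicit generators $x, y \in R$ with $x^9, y^9 \in L^\times$, $y x = \rho_9 x y$, and $[L[x] : L] = [L[y] : L] = 9$; any such pair identifies $R \cong (x^9, y^9)_{9, L}$.

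The natural candidate for $x$ comes from the form $b_2' = -(t_1 t_2)^3\bigl(1 + \tfrac{l_2}{l_1} t_3\bigr)$: I would set $\mu = (t_1 t_2)^{-1} z_2 \in R$. Since $\sigma_2(t_1 t_2) = \rho_3\, t_1 t_2$, the relation $z_2(t_1 t_2) = \rho_3(t_1 t_2) z_2$ lets me compute directly $\mu^3 = (t_1 t_2)^{-3} z_2^3 = (f_1 f_2)^{-1} b_2' = -\bigl(1 + \tfrac{l_2}{l_1} t_3\bigr) \in L[t_3]$. Solving for $t_3$ gives $t_3 \in L[\mu]$, so $[L[\mu] : L] = 9$. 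But $\mu^9 \notin L$ in general, so $\mu$ itself is not a Kummer generator.

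To get $x^9 \in L$, I would replace $\mu$ by $\mu' = \xi\mu$ for a suitable $\xi \in K_L^\times$. Then $(\mu')^3 = N_{\sigma_2}(\xi)\, \mu^3 \in L[t_3]$, and $(\mu')^9 \in L$ iff $(\mu')^3$ is a $\Gal(L[t_3]/L)$-eigenvector, i.e.\ $N_{\sigma_2}(\xi)\, \mu^3 = \lambda\, t_3^j$ for some $\lambda \in L$, $j \in \{0,1,2\}$. A parallel construction on the $z_1$ side produces $z_1' = \eta z_1$ with $(z_1')^9 \in L$, provided $N_{\sigma_1}(\eta)\, b_1'$ is a $\Gal(L[t_1 t_2]/L)$-eigenvector $\lambda'(t_1 t_2)^{j'}$. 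Finally, the symbol commutation $\mu' z_1' = \rho_9\, z_1'\mu'$, combined with $z_2 z_1 = \rho_9 u \cdot z_1 z_2$ in $R$, forces the cocycle condition
\[
u \;=\; \frac{\sigma_1(\xi)/\xi}{\sigma_2(\eta)/\eta} \quad \text{in } K_L^\times.
\]

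The main obstacle is the simultaneous solvability of these three conditions for $\xi$ and $\eta$: the two Kummer-eigenvector conditions and the cocycle equation. The groundwork from the previous section --- the trace-zero normalization of $b_2$, which fixed the particular form of $b_2'$, and the tensoring with $D$, which inserted the factor $\rho_9$ into the cocycle --- was designed precisely to make these conditions solvable, and the core of the proof should be a direct verification that the required $\xi, \eta$ exist. I expect this to reduce to identities in $K_L$ that can be checked by explicit computation of norms.
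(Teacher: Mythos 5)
There is a genuine gap here, and the route itself misses what the groundwork of the previous section was set up to deliver. First, you are trying to prove more than is needed: since $\rho_9\in F$, it suffices to exhibit a single $9$-central element $x$ (i.e.\ $x^9\in L^\times$ and $[L[x]:L]=9$); then $L[x]$ is a cyclic maximal subfield and $R$ is automatically a symbol algebra $(l_3,x^9)_{9,L}$ for some $l_3\in L$, with no need to construct the second generator explicitly, impose $yx=\rho_9 xy$ by hand, or solve any cocycle condition. Your insistence that the second generator have the special form $\eta z_1$ drags in the condition $u=\frac{\sigma_1(\xi)/\xi}{\sigma_2(\eta)/\eta}$, which for the generic $u$ is exactly the sort of relation one cannot expect to solve, and which is in any case irrelevant to the statement.

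Second, and more importantly, your candidate for the $9$-central element is multiplicative, $\mu'=\xi\mu$ with $\mu=(t_1t_2)^{-1}z_2$, and you reduce the proposition to the unproven claim that $N_{\sigma_2}(\xi)\cdot\bigl(1+\tfrac{l_2}{l_1}t_3\bigr)$ can be made equal to $\lambda t_3^{j}$. That asks that $\bigl(1+\tfrac{l_2}{l_1}t_3\bigr)^{-1}\lambda t_3^{j}$ lie in the norm group $N_{K_L/L[t_3]}(K_L^\times)$ for some $\lambda\in L^\times$, a nontrivial arithmetic condition you have not verified and which there is no reason to expect to hold generically. The missing idea is that the correct combination is \emph{additive}, not multiplicative: set $x=t_1t_2+z_2$. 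Since $z_2(t_1t_2)=\rho_3(t_1t_2)z_2$, one has $(t_1t_2+z_2)^3=(t_1t_2)^3+z_2^3$, and by design of $b_2'$ this equals $f_1f_2+\bigl(-f_1f_2-\tfrac{f_1f_2l_2}{l_1}t_3\bigr)=-\tfrac{f_1f_2l_2}{l_1}t_3$, whence $x^9=-\bigl(\tfrac{f_1f_2l_2}{l_1}\bigr)^3 l\in L$. This is precisely what the trace-zero normalization of $b_2$ and the tensoring with $D$ were engineered to produce: the constant term of $z_2^3$ exactly cancels $(t_1t_2)^3$. With this one-line computation the proposition follows and no norm equations need to be solved.
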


\begin{proof}
This proof is just as in \cite{LRRS}.
Since we assume $\rho_9\in F$ it is enough to find a $9$-central element.
Notice that in $R$ we have $z_2t_1t_2=\rho_3 t_1t_2z_2$; $z_2^3=-f_1f_2-\frac{f_1f_2l_2}{l_1}t_3$ and $(t_1t_2)^3=f_1f_2$.
Thus defining $x=t_1t_2+z_2$ we get $x^3=(t_1t_2+z_2)^3=(t_1t_2)^3+z_2^3=-\frac{f_1f_2l_2}{l_1}t_3$ implying $x^9=-(\frac{f_1f_2l_2}{l_1})^3 l\in L$.
Thus $R=(l_3,-(\frac{f_1f_2l_2}{l_1})^3 l)_{9,L}$ for some $l_3\in L$ and we are done.
\end{proof}


All of the above gives the following theorem:

\begin{thm}\label{MT1}
Let $A$ be a generic abelian crossed product with respect to $\mathbb{Z}_3\times \mathbb{Z}_3$. Then after a quadratic extension $L/F$ we have 
$A_L$ is similar to $R\otimes D^{-1}\otimes B^{-1}$ where $R,D,B$ are symbols as above.
\end{thm}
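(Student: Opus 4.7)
The plan is to chain together the similarities established step-by-step in the preceding propositions, and then verify the degree bound on $L/F$. By construction $A \otimes B \sim C$, so inverting $B$ and extending scalars to $L$ gives
\[A_L \;\sim\; C_L \otimes B_L^{-1}.\]
The proposition just proved identifies $R \doteqdot C_L \otimes D$ as a symbol of degree $9$, hence $C_L \sim R \otimes D^{-1}$. Substituting this into the previous display produces
\[A_L \;\sim\; R \otimes D^{-1} \otimes B_L^{-1},\]
which is exactly the stated similarity. I do not expect any obstacle in this bookkeeping step: every similarity being combined has already been witnessed, and the class $[A_L]$ in the Brauer group is simply the formal sum of the pieces built over Sections~2 and~3.

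The remaining task is to check that $L/F$ is (at most) quadratic. By choice, $L$ was introduced as the fixed subfield of a $3$-Sylow subgroup of $\Gal(T/F)$, where $T$ is the Galois closure of $F[v^3]/F$. Proposition \ref{1} guarantees $[F[v^3]:F] = 3$, so $T/F$ is the splitting field of an irreducible cubic; therefore $\Gal(T/F)$ embeds as a transitive subgroup of $S_3$, and the only such subgroups are $\mathbb{Z}_3$ and $S_3$ (of orders $3$ and $6$). A $3$-Sylow then has index $1$ or $2$ respectively, giving $[L:F]\in\{1,2\}$, as required.

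The only place I can imagine a subtle point is whether scalar extension to $L$ commutes correctly with the Brauer-group manipulations — for instance, whether the algebra $B=(E_1,\sigma_2,-c_0/a_0)$, which is defined from the data $c_0,a_0\in F^{\times}$ of the generic crossed product before $L$ was introduced, behaves well after extension. But $B$ is genuinely defined over $F$, so $B_L$ and $B_L^{-1}$ are unambiguous and compatible with the Brauer class of $C_L = A_L\otimes B_L$. With that observation, the theorem is just the formal combination of the results already in hand.
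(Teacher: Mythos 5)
Your proposal is correct and follows exactly the route the paper intends: Theorem~\ref{MT1} is presented there as a formal summary of the preceding constructions ($A\otimes B\sim C$, then $R=C_L\otimes D$), and your bookkeeping chain $A_L\sim C_L\otimes B_L^{-1}\sim R\otimes D^{-1}\otimes B_L^{-1}$ is that same argument made explicit. Your verification that $[L:F]\le 2$ (via the Galois closure of the cubic $F[v^3]/F$ having group inside $S_3$, so the $3$-Sylow has index at most $2$) is a detail the paper leaves implicit, and it is a worthwhile addition.
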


In order to go down to $F$ we take corestriction. Using Rosset-Tate and the projection formula, (\cite{GT} 7.4.11 and 7.2.7), we get:

\begin{thm}\label{MT2}
Let $A$ be a generic abelian crossed product with respect to $\mathbb{Z}_3\times \mathbb{Z}_3$. Then 
$A=\sum_{i=1}^{4}C_i$ where $C_1,C_2,C_3$ are symbols of degree 9 and $C_4$ is a symbol of degree 3.
\end{thm}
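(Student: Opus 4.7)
The plan is to apply the corestriction map $\cores[L/F]$ to the similarity $A_L\sim R\otimes D^{-1}\otimes B_L^{-1}$ of Theorem~\ref{MT1}. Because $[L:F]=2$, the composition $\cores[L/F]\circ\res[L/F]$ acts as multiplication by $2$ on $\Br(F)$, so by additivity of corestriction
\[
2A \sim \cores[L/F](R)\;-\;\cores[L/F](D)\;-\;\cores[L/F](B_L)
\]
in $\Br(F)$. I would then bound the symbol length of each term on the right, and finally recover $A$ itself using that $\gcd(2,9)=1$.

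The key observation that brings the count down to four (rather than five) is that two of the three symbols already have a slot in $F$, so they can be handled by the cheap projection formula (\cite{GT} 7.2.7) without invoking the Rosset--Tate algorithm at all. Since $B$ is defined over $F$, projection formula gives $\cores[L/F](B_L)\sim 2B$, which is similar to a single degree-$3$ symbol over $F$ because $B$ has exponent $3$. For $D=(f_1f_2,(-\frac{f_1f_2}{l_1})^3 l^{-1})_{9,L}$ the first slot $f_1f_2$ lies in $F^\times$ by construction of $E_1,E_2$, so projection formula again yields a single symbol, namely $\cores[L/F](D)\sim(f_1f_2,\,\N_{L/F}((-\frac{f_1f_2}{l_1})^3 l^{-1}))_{9,F}$, of degree $9$ over $F$. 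Only $R$, whose two slots genuinely live in $L$, requires the full Rosset--Tate algorithm (\cite{GT} 7.4.11); since $[L:F]=2$, this writes $\cores[L/F](R)$ as a sum of at most two degree-$9$ symbols over $F$.

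Assembling these contributions expresses $2A$ as a sum of at most three degree-$9$ symbols and one degree-$3$ symbol. To descend from $2A$ back to $A$, observe that $A$ has exponent dividing $9$, so $A\sim 5\cdot(2A)$ in $\Br(F)_9$; and since $5\cdot[(a,b)_{n,F}]=[(a^5,b)_{n,F}]$, passing from $2A$ to $A$ merely raises each factor to the fifth power, which preserves both the symbol property and its degree. This yields a decomposition $A\sim C_1+C_2+C_3+C_4$ with three symbols of degree $9$ and one of degree $3$, as claimed. The only calculation with any content is the Rosset--Tate step for $R$; the main conceptual point is recognising that the projection formula already handles the other two terms, which is exactly what keeps the total at four instead of five.
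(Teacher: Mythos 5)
Your proposal is correct and follows essentially the same route as the paper: apply corestriction to the decomposition of Theorem~\ref{MT1}, use Rosset--Tate on $R$ to get two degree-$9$ symbols, the projection formula on $D$ (whose first slot $f_1f_2$ lies in $F$) to get one more, and recover $C_4$ from $B$, which is already defined over $F$. The paper leaves implicit the final descent from $2A$ to $A$ via multiplication by $5$ modulo $9$; you have simply spelled that step out.
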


\begin{proof}
One gets $C_1,C_2$ from the corestriction of $R$ using R.T. $C_3$ from the corestriction of $D$ using the projection formula and $C_4$ comes from $B$.

\end{proof}

\newpage

\section{The exponent $3$ case}
In this section we will consider the case were $\exp(A)=3$.
Notice that from \ref{MT1} $A_L \sim R\otimes D^{-1}\otimes B^{-1} = (a,b)_{9,L}\otimes(\gamma,c)_{9,L} \otimes (\alpha,\beta)_{3,L}$ where $\alpha,\beta,\gamma \in F^{\times}$ and $a,b,c\in L^{\times}$.

\begin{thm}\label{MT3}
Assume $A$ has exponent $3$, then $A$ is similar to the sum of $16$ degree $3$ symbols over a quadratic extension and $31$ degree $3$ symbols over $F$.

\end{thm}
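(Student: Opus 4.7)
The plan is to first decompose $A_L$ over the quadratic extension $L/F$ of \ref{MT1} as a tensor product of $16$ degree-$3$ symbol algebras over $L$, and then descend to $F$ by corestriction to obtain the $31$-symbol decomposition.

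\textbf{Step 1 (over $L$).} By \ref{MT1}, $A_L \sim R \otimes D^{-1} \otimes B_L^{-1}$ with $R=(a,b)_{9,L}$, $D=(\gamma,c)_{9,L}$ (where $\gamma\in F^{\times}$), and $B=(\alpha,\beta)_{3,F}$. Since $\exp(A)=\exp(B)=3$, the tensor product $M := R \otimes D^{-1}$ has exponent dividing $3$. This translates to the Brauer-class identity $R^{\otimes 3}\sim D^{\otimes 3}$ in $\Br(L)$, and via the standard identification $(u,v)_{9,L}^{\otimes 3}\sim(u,v)_{3,L}$ (coming from the inclusion $\Br(L)_3\hookrightarrow\Br(L)_9$ induced by $\mu_3\hookrightarrow\mu_9$), this becomes the degree-$3$ symbol relation $(a,b)_{3,L}\sim(\gamma,c)_{3,L}$. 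Exploiting this, I would apply Tignol-style exponent reduction (item $4$ of the introduction) to rewrite $M$ as a tensor product of $15$ degree-$3$ symbols over $L$; together with $B_L^{-1}$ this yields the decomposition of $A_L$ into $16$ degree-$3$ symbols over $L$, one of which, namely $B_L$, is already defined over $F$.

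\textbf{Step 2 (descent to $F$).} Since $[L\!:\!F]=2$ is coprime to $\exp(A)=3$, we have $\cores[L/F](A_L)\sim A^{\otimes 2}\sim A^{\mathrm{op}}$, hence $A\sim\cores[L/F](A_L^{\mathrm{op}})$. Apply $\cores[L/F]$ to the $16$-term decomposition of $A_L^{\mathrm{op}}$. For each of the $15$ factors not defined over $F$, Rosset-Tate (\cite{GT} 7.4.11) produces a product of $[L\!:\!F]=2$ degree-$3$ symbols over $F$, contributing $30$ symbols. For the remaining factor $B_L^{\mathrm{op}}$, the projection formula (\cite{GT} 7.2.7) gives $\cores[L/F](B_L^{\mathrm{op}})\sim (B^{\mathrm{op}})^{\otimes 2}\sim B$ (using $\exp(B)=3$), a single degree-$3$ symbol over $F$. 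The total count is $30+1=31$, matching the claim.

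\textbf{Main obstacle.} The delicate point is Step 1: producing an explicit decomposition of $M=R\otimes D^{-1}$ into exactly $15$ degree-$3$ symbols over $L$ when the individual degree-$9$ symbols $R$ and $D$ need not have exponent $3$. This requires unwinding the symbol identity $(a,b)_{3,L}\sim(\gamma,c)_{3,L}$ to construct correction symbols that bring Tignol's reduction into play, and careful bookkeeping to pin down the count. Keeping $B_L$ as a single symbol defined over $F$ throughout is the key observation that lets the projection formula save one symbol during corestriction, producing $31$ rather than $32$ symbols over $F$.
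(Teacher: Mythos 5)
Your overall architecture matches the paper's: decompose $A_L \sim R \otimes D^{-1} \otimes B_L^{-1}$ into $16$ degree-$3$ symbols over $L$ (with $B$ defined over $F$), then corestrict, getting $2\cdot 15=30$ symbols from Rosset--Tate plus one from the projection formula applied to $B$, for $31$ in total. Your Step 2 and your derivation of the relation $(a,b)_{3,L}\sim(\gamma,c^{-1})_{3,L}$ from $\exp(A)=3$ are exactly as in the paper. However, the step you flag as the ``main obstacle'' is the actual content of the theorem, and you leave it unresolved: Tignol's theorem applies to a \emph{single} symbol of degree $p^n$ and exponent $p^m$, whereas here only the product $R\otimes D^{-1}$ has exponent $3$ --- neither degree-$9$ factor need have exponent $3$ individually --- so ``Tignol-style exponent reduction'' does not apply as stated, and your count of $15$ is asserted rather than derived.

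The paper closes this gap with the chain lemma for degree-$3$ symbols (Rost \cite{Rost}, Matzri--Vishne \cite{MV}): since $(a,b)_{3,L}=(\gamma,c^{-1})_{3,L}$, there exist $x_1,x_2,x_3\in L^{\times}$ with
$$(a,b)_{3,L}=(a,x_1)_{3,L}=(x_2,x_1)_{3,L}=(x_2,x_3)_{3,L}=(\gamma,x_3)_{3,L}=(\gamma,c^{-1})_{3,L}.$$
This chain of four elementary moves, each changing one slot at a time, lets one rewrite
$$(a,b)_{9,L}\otimes(\gamma,c)_{9,L}\sim (a,\frac{b}{x_1})_{9,L}\otimes(\frac{a}{x_2},x_1)_{9,L}\otimes(x_2,\frac{x_1}{x_3})_{9,L}\otimes(\frac{x_2}{\gamma},x_3)_{9,L}\otimes(\gamma,x_3c)_{9,L},$$
where now each of the five degree-$9$ factors has exponent $3$, because the associated degree-$3$ symbol is split by the corresponding link of the chain. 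Only at this point does Tignol's theorem apply, to each factor separately, giving $5\times 3=15$ degree-$3$ symbols over $L$; adding $B_L$ gives $16$, and corestriction gives $31$ over $F$ as you describe. Without the chain lemma (or an explicit substitute producing such ``correction symbols''), your Step 1 does not go through, and the number $15$ has no justification.
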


\begin{proof}
The idea for this proof is credited to L.H. Rowen, U. Vishne and E. Matzri.
Since $\exp(A)=3$ we have $F\sim A^3\sim R^3\otimes D^{-3}\otimes B^{-3}\sim R^3\otimes D^{-3}\sim (a,b)_{3,L}\otimes(\gamma,c)_{3,L}$.
Thus we get $(a,b)_{3,L}=(\gamma,c^{-1})_{3,L}$. Now by the chain lemma for degree 3 symbols in \cite{Rost} or \cite{MV} we have $x_{1,2,3} \in L^{\times}$ such that: $$(a,b)_{3,L}=(a,x_1)_{3,L}= (x_2,x_1)_{3,L}= (x_2,x_3)_{3,L}= (\gamma,x_3)_{3,L}=(\gamma,c^{-1})_{3,L}$$ 

Now we write $$(a,\frac{b}{x_1})_{9,L}\otimes(\frac{a}{x_2},x_1)_{9,L}\otimes(x_2,\frac{x_1}{x_3})_{9,L}\otimes(\frac{x_2}{\gamma},x_3)_{9,L}\otimes(\gamma,x_3c)_{9,L}\sim (a,b)_{9,L}\otimes(\gamma,c)_{9,L}$$
Thus $A\sim (a,b)_{9,L}\otimes(\gamma,c)_{9,L} \otimes (\alpha,\beta)_{3,L}\sim (a,\frac{b}{x_1})_{9,L}\otimes(\frac{a}{x_2},x_1)_{9,L}\otimes(x_2,\frac{x_1}{x_3})_{9,L}\otimes(\frac{x_2}{\gamma},x_3)_{9,L}\otimes(\gamma,x_3c)_{9,L}\sim (a,b)_{9,L}\otimes(\gamma,c)_{9,L}\otimes (\alpha,\beta)_{3,L}$ where now all the degree $9$ symbols are of exponent $3$.
But by a theorem of Tignol, \cite{T1}, each of these symbols is similar to the product of three degree $3$ symbols. Thus we have that $A_L$ is similar to the product of $16$ degree $3$ symbols and over $F$ to the product of $31$ symbols of degree $3$ and we are done.

\end{proof}
\newpage
\section{The general case of a degree $9$ algebra}

In this section we combine the results of sections $2$ and $3$ with \cite{RS} to handle the general case of a degree $9$ algebra of exponent $9$ and $3$.
Let $A$ be a $F$-central simple algebra of degree $9$.

The first step would be to follow \cite{RS} to find a field extension $P/F$
such that $A_P$ is an abelian crossed product with respect to $\mathbb{Z}_3\times \mathbb{Z}_3$ and $[P:F]$ is prime to $3$.
The argument in \cite{RS} basically goes as follows: 
Let $K\subset A$ be a maximal subfield, i.e. $[K:F]=9$.
Now let $F\subset K \subset E$ be the normal closure of $K$ over $F$.
Since we know nothing about $K$ we have to assume $G=\Gal(E/F)=S_9$.
Let $H<G$ be a $3$-sylow subgroup and $L=E^H$, then $[L:H]=4480$.
Now extend scalars to $L$, then $KL\subset A_L$ as a maximal subfield. By Galois correspondence $KL=E^{H_1}$ for some subgroup $H_1<H$ and $[H:H_1]=[KL:L]=9$.
Since $H$ is a $3$-group we can find $H_1\triangleleft H_2\triangleleft H$ such that $[H:H_2]=3$ thus we have $L=E^H\subset E^{H_2} \subset KL=E^{H_1}\subset E$ and since $H_2 \triangleleft H$ we know the extension $E^{H_2}/L$ is Galois with group~$\Gal(E^{H_2}/L)=<\sigma>\cong H/H_2\cong C_3$.
Thus in $A_L$ we have the subfield $E^{H_2}$ which has a non trivial $L$- automorphism $\sigma$. Now let $z\in A$ be an element inducing $\sigma$ (such $z$ exists by Skolem-Noether). Consider the subfield $L[z]/L$, since $z^3$ commutes with $E^{H_2}$ and $z$ does not $[L[z]:L[z^3]]=3$.
In the best case scenario we have $L[z^3]=L$ which will imply $A_L$ decomposes into the tensor product of two symbols of degree $3$ and we are done. In the general case we will have $[L[z^3]:L]=3$. If $L[z^3]/L$ is Galois we are done since $E^{H_2}[z^3]$ will be a maximal subfield Galois over $L$ with group isomorphic to $\mathbb{Z}_3\times \mathbb{Z}_3$, but again in general this should not be the case. However we can extend scalars to make $L[z^3]/L$ Galois, in particular consider $P=L[\disc(L[z^3])]$ then, $[P:L]=2$ and $P[z^3]/P$ is Galois and we are done.
To summarize we have found an extension $P=L[\disc(L[z^3])]$ with $[P:F]=4480\cdot 2=8960$ such that $A_P$ contains a maximal subfield $PE^{H_2}[z^3]/P$ Galois over $P$ with group isomorphic to $\mathbb{Z}_3\times \mathbb{Z}_3$.
\newpage
Combining the above with the results of sections $2,3$ and using Rosset-Tate we get the following theorem.
\begin{thm}\label{MT4}
Let $A$ be an $F$-central simple algebra of degree $9$. Then 
\begin{enumerate}
	\item $A$ is similar to the product of $35840$ symbol algebras, ($8960$ of degree $3$ and $26880$ of degree $9$).
	\item If $A$ is of exponent $3$ then $A$ is similar to the product of $ 277760$ symbol algebras of degree $3$.
\end{enumerate}

\end{thm}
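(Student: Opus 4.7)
The plan is to combine the abelian crossed product machinery of Theorems~\ref{MT2} and \ref{MT3} with the descent argument of \cite{RS} recalled immediately above. First I would invoke that construction: after passing to the prime-to-$3$ extension $P/F$ of degree $[P:F]=8960$, the algebra $A_P$ becomes an abelian crossed product for $\mathbb{Z}_3\times\mathbb{Z}_3$ over $P$. Since $8960=2^8\cdot 5\cdot 7$ is coprime to $9$, I can pick an integer $k$ with $[P:F]\cdot k\equiv 1\pmod{\exp A}$, and then in $\Br(F)$ we have $A\sim\bigl(\operatorname{cor}_{P/F}(A_P)\bigr)^k$. This reduces the problem to presenting $\operatorname{cor}_{P/F}(A_P)$ as a tensor product of symbols, because a $k$-th power of a symbol is again a single symbol: $(a,b)_{n,F}^{\otimes k}\sim(a,b^k)_{n,F}$, so the $k$-th power operation does not inflate the symbol count.

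For part~(1), I would apply Theorem~\ref{MT2} to $A_P$ viewed as an abelian crossed product over $P$, obtaining $A_P\sim C_1\otimes C_2\otimes C_3\otimes C_4$ with $C_1,C_2,C_3$ symbols of degree $9$ and $C_4$ a symbol of degree $3$, all over $P$. (Although Theorem~\ref{MT2} is stated in the generic case, the conclusion only sharpens if $A_P$ happens to be decomposable, so the stated count remains an upper bound in full generality.) Now I would corestrict: by Rosset--Tate (\cite{GT}, 7.4.11), the corestriction of a single degree-$n$ symbol over $P$ is Brauer equivalent to a product of at most $[P:F]=8960$ symbols of degree $n$ over $F$. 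Applying this to the four factors yields $3\cdot 8960=26880$ symbols of degree $9$ plus $1\cdot 8960=8960$ symbols of degree $3$, for a total of $35840$ symbols representing $\operatorname{cor}_{P/F}(A_P)$, and hence $A$ itself by the $k$-th power observation.

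For part~(2) the argument is formally identical with Theorem~\ref{MT3} replacing Theorem~\ref{MT2}: applied to $A_P$ of exponent $3$, it writes $A_P$ as a product of $31$ symbols of degree $3$ over $P$. Corestricting via Rosset--Tate produces $31\cdot 8960=277760$ symbols of degree $3$ over $F$, and the recovery step uses $k=2$ (since $8960\cdot 2\equiv 1\pmod 3$). The genuinely hard input is concentrated in Sections~2--3 and in the descent argument of \cite{RS}; here the only points needing verification are purely formal --- that Rosset--Tate applies uniformly to each of the $4$ (resp.\ $31$) factors, and that the recovery of $A$ from $\operatorname{cor}_{P/F}(A_P)$ preserves the symbol count via the coprimality of $[P:F]$ and $\exp A$.
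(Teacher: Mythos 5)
Your proposal is correct and follows essentially the same route as the paper, which simply combines the prime-to-$3$ descent of \cite{RS} (giving $[P:F]=8960$) with Theorems~\ref{MT2} and \ref{MT3} applied over $P$ and a Rosset--Tate corestriction back to $F$, yielding $4\cdot 8960=35840$ and $31\cdot 8960=277760$. The details you supply that the paper leaves implicit --- recovering $A$ from $\operatorname{cor}_{P/F}(A_P)$ via a power $k$ with $k[P:F]\equiv 1 \pmod{\exp A}$, and the fact that this power operation does not increase the symbol count --- are accurate.
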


\section{Application to essential dimension}
In \cite{M} Merkurjev computes the essential $p$-dimension of $PGL_{p^2}$ relative to a fixed field $k$ to be $p^2+1$.
One can interprate this result as follows:
Let $F$ be a field of definition (relative to a base field $k$) for the generic division algebra of degree $p^2$. Let $E/F$ be the prime to $p$ closure of $F$. Let $l,$ $l\subset k \subset E$, be a subfield of $E$ over which $A$ is defined. Then $l/k$ has transcendece degree at least $p^2+1$ (and such $l$ exists with transcendence degree exactly $p^2+1$).
It makes sense to define the essential dimension and the essential $p$-dimension of the class of an algebra $A$ (with respect to a fixed base field $k$).
\begin{defn}
Let $A\in \Br(F)$. Define the essential dimension and the essential $p$-dimension of the class of $A$ (with respect to a fixed base field $k$) as:
$$\edc(A)=\min\{\ed(B) | B\sim A\}$$ 
$$\edc_p(A)=\min\{\ed_p(B) | B\sim A\}$$
\end{defn}
Notice that \cite{M} for p=2 gives $\ed_2(PGL_{2^2})=5$ and for $p=3$ it gives $\ed_3(PGL_{3^2})=10$.
Now assume $F$ is prime to $p$ closed. Then as proved in \cite{RS} every $F$-csa of degree $p^2$ is actually an abelian crossed product with respect to $\mathbb{Z}_p \times \mathbb{Z}_p$.
Thus, in this language, in \cite{LRRS} they prove:
\begin{thm}
Let $A$ be a generic division algebra of degree $4$, then
$\edc(A)=\edc_2(A)=4$
\end{thm}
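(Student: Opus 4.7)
The plan is to squeeze $\edc_2(A)$ and $\edc(A)$ between $4$ and $4$, using the trivial inequality $\edc_2(A)\leq \edc(A)$ from the definitions. So the task reduces to establishing the upper bound $\edc(A)\leq 4$ and the lower bound $\edc_2(A)\geq 4$, which together give $\edc_2(A)=\edc(A)=4$.

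For the upper bound, I would invoke item $(6)$ of the introduction (\cite{LRRS}): the generic degree $4$ algebra $A$, being a $\mathbb{Z}_2\times \mathbb{Z}_2$-abelian crossed product over $F$, is similar to a tensor product $B=(a,b)_{4,F}\otimes (c,d)_{2,F}$ of a single degree $4$ symbol with a quaternion. Because $B$ depends only on the four scalars $a,b,c,d\in F^\times$, it is already defined over the subfield $k(a,b,c,d)\subseteq F$, whose transcendence degree over $k$ is at most $4$. Hence $\ed(B)\leq 4$, and Brauer equivalence $B\sim A$ yields $\edc(A)\leq \ed(B)\leq 4$. This mirrors exactly the kind of use of the symbol decomposition that the introduction signals one should exploit.

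For the lower bound $\edc_2(A)\geq 4$, one needs an obstruction that distinguishes the Brauer class $[A]$ from classes that can be realized over lower-dimensional families. The natural route is to apply Merkurjev's essential $p$-dimension calculation (\cite{M}) not to $PGL_4$ directly, but to the functor parametrizing Brauer classes of exponent and index dividing $4$; since the generic degree $4$ algebra is a versal object for this functor, any Brauer-equivalent algebra $B\sim A$ determines the same class $[A]$ over its field of definition $F_0$, forcing the transcendence degree of $F_0$ over $k$ to be at least $4$ after passing to a prime-to-$2$ extension. Taking the minimum over all such $B$ yields $\edc_2(A)\geq 4$. This is where the main obstacle lies: the upper bound is a direct and concrete computation from the symbol-plus-quaternion decomposition of \cite{LRRS}, but the lower bound cannot be proved by parameter counting and genuinely requires a Merkurjev-type incompressibility input for the Brauer-class functor with the exponent constraint. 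By contrast, Merkurjev's bare result $\ed_2(PGL_4)=5$ quoted in the paper gives only the weaker $\ed_2(A)\geq 5$ for the algebra itself, which is off by one; the saving of one parameter upon passing from the algebra to its Brauer class is exactly what the symbol$+$quaternion factorization buys us.
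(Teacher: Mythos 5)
Your upper bound is correct and is exactly the mechanism behind the result as quoted here: the decomposition $A\sim (a,b)_{4,F}\otimes(c,d)_{2,F}$ from \cite{LRRS} exhibits a Brauer-equivalent algebra defined over $k(a,b,c,d)$, of transcendence degree at most $4$ over $k$, whence $\edc(A)\leq 4$. (Note that the paper itself supplies no proof of this theorem; it is stated as a reformulation of a result of \cite{LRRS}, so the only thing to check your argument against is that source.)

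The lower bound $\edc_2(A)\geq 4$ is a genuine gap, and you have half-conceded it yourself. The assertion that versality of $A$ for the functor of Brauer classes of index and exponent dividing $4$ ``forces the transcendence degree of $F_0$ over $k$ to be at least $4$'' is circular: versality only bounds the essential dimension of a functor from \emph{above} by the complexity of the versal object; a lower bound requires producing a specific incompressible object and proving it cannot descend, and you produce none. Merkurjev's theorem \cite{M} computes $\ed_2(PGL_4)=5$, which concerns the functor of degree-$4$ \emph{algebras}, not of their Brauer classes, and (as you observe) is off by one in the wrong direction; moreover it postdates \cite{LRRS} by several years, so it cannot be the intended input. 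In \cite{LRRS} the lower bound is obtained by a separate incompressibility argument --- specializing the Brauer class to a suitably ramified algebra over an iterated Laurent series field and showing that the resulting ramification data cannot be realized over a field of transcendence degree at most $3$, even after a prime-to-$2$ extension. Without some such argument, what you have actually proved is only $\edc_2(A)\leq\edc(A)\leq 4$, not the stated equalities.
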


For $p=3$ Theorem \ref{MT2} says:

\begin{thm}
Let $A$ be a generic division algebra of degree $9$, then
$\edc_3(A)\leq 6$
\end{thm}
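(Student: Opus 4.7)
My plan is to derive the bound as a direct consequence of Theorem~\ref{MT1}, using the standard identity $\edc_3(A)=\ed_3([A])$ for the essential $3$-dimension of a Brauer class, and the fact that bounding $\ed_3([A])\leq 6$ amounts to exhibiting, after some prime-to-$3$ extension $L/F$, a subfield $L_0\subset L$ of transcendence degree at most $6$ over $k$ such that $[A_L]$ lies in the image of $\Br(L_0)\to\Br(L)$.

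I would proceed as follows. First, by the theorem of Rowen--Saltman recalled in Section~5, there is a prime-to-$3$ extension of the base field (for instance, its prime-to-$3$ closure) over which the generic degree-$9$ algebra $A$ becomes an abelian crossed product with respect to $\mathbb{Z}_3\times\mathbb{Z}_3$; it remains indecomposable since its exponent is still $9$. Theorem~\ref{MT1} then produces a further quadratic, hence prime-to-$3$, extension $L$ over which
\[
A_L\;\sim\;R\otimes D^{-1}\otimes B_L^{-1},
\]
a tensor product of two degree-$9$ symbols and one degree-$3$ symbol over $L$. Each symbol is specified by two scalars in $L^\times$, so six elements $a_1,b_1,a_2,b_2,a_3,b_3\in L$ determine the Brauer class $[A_L]$.

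Setting $L_0:=k(a_1,b_1,a_2,b_2,a_3,b_3)\subset L$, one has $\mathrm{tr.deg}(L_0/k)\leq 6$; each of the three symbols is already defined over $L_0$, so $[A_L]$ indeed lies in the image of $\Br(L_0)\to\Br(L)$. This gives $\ed([A_L])\leq 6$, and therefore $\ed_3([A])\leq 6$ and $\edc_3(A)\leq 6$.

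There is no real obstacle here: the argument is essentially bookkeeping on top of Theorem~\ref{MT1}. The only point requiring some care is to confirm that the six symbol scalars genuinely lie in $L$, rather than in some larger auxiliary field. For $R$ and $D^{-1}$ this is visible from the explicit construction at the end of Section~2, and the two scalars of $B$ already belong to $F\subset L$, so no further parameters enter.
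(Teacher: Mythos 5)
Your argument is correct and is essentially the one the paper intends: the bound $6$ is exactly the parameter count for the three symbols $R$, $D$, $B$ of Theorem~\ref{MT1} over the quadratic (hence prime-to-$3$) extension $L$, combined with the Rowen--Saltman reduction to the abelian crossed product case. Note that the paper nominally derives the theorem from Theorem~\ref{MT2}, but as your computation makes clear the count of $6$ really rests on Theorem~\ref{MT1} (the four symbols of Theorem~\ref{MT2} over $F$ would only give $8$ parameters), so your version is the right way to read the paper's intent.
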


\newpage

\end{document}